\newtheorem{lemma}{Lemma}
\newtheorem{fact}[lemma]{Fact}
\newtheorem{theorem}[lemma]{Theorem}
\newtheorem{definition}[lemma]{Definition}
\newtheorem{lem}[lemma]{Lemma}
\newtheorem{prop}[lemma]{Proposition}
\newtheorem{rem}[lemma]{Remark}
\newtheorem*{remark*}{Remark}
\def\S{{\mathcal{S}}}
\def\eps{\varepsilon}
\def\qed{\hfill $\vcenter{\hrule height .3mm
		\hbox {\vrule width .3mm height 2.1mm \kern 2mm \vrule width .3mm
			height 2.1mm} \hrule height .3mm}$ \bigskip}
\def\eps{\varepsilon}
\def\to{\rightarrow}
\newcommand{\iprod}[2]{\left\langle #1,#2 \right\rangle} 
\def\RR{\mathbb{R}}
\def\NN{\mathbb{N}}
\def\Im{{\rm Im}\,}
\def\eps{{\varepsilon}}
\def\K{{\mathcal{K}}}
\newcommand{\setdef}[2]{\left\{ #1 \ \middle| \ #2 \right\}}
\title{A full classification of the isometries of the class of ball-bodies}
\author{Shiri Artstein-Avidan, Arnon Chor, Dan Florentin \thanks{The first and second named authors are supported in part by the ERC under the European Union’s Horizon 2020 research and innovation programme (grant agreement no. 770127), by ISF grant Number 784/20, and by the Binational Science Foundation (grant no. 2020329).}}
\date{\today}
\begin{document}
\maketitle 
\begin{abstract}
	Complementing our previous results, we give a classification of all isometries (not necessarily surjective) of the metric space consisting of ball-bodies, endowed with the Hausdorff metric. ``Ball bodies'' are    convex bodies which are intersections of translates of the Euclidean unit ball.   We show that any such isometry is either a rigid motion, or a rigid motion composed with the $c$-duality mapping. In particular, any isometry on this metric space has to be surjective. 
\end{abstract}

We denote the class of convex bodies (i.e. non-empty compact convex subsets of $\RR^n$) by $\K^n$.
The class $\S_n \subseteq \K^n$ consists of all convex bodies which are intersections of translated Euclidean unit balls, $x+B_2^n$. They can be equivalently defined as 
the summands of $B_2^n$, i.e. those $K \in \K^n$ for which there exists $L \in \K^n$ with $K+ L = B_2^n$, or as convex bodies all of whose sectional curvatures at every point are at least $1$. 
This class appears very naturally in several different subfields in convexity, for example in the study of bodies of constant width (see \cite{martini2019bodies} for a survey), in optimal transport with respect to special cost functions (see \cite{artstein2023zoo}), in the Kneser-Poulsen conjecture (see \cite{bezdek2008kneser}), in isoperimetric type questions (see \cite{borisenko2017reverse,drach2023reverse,bezdek2021volumetric}), in connection to the Hadwiger illumination conjecture (see \cite{bezdek2012illuminating}), and more.  
This class was studied in \cite{bezdek2007ball} where among other results, many analogues of combinatorial flavour such as Caratheodory, Helly and Steinitz type theorems were proved for this class.  
The first and third named authors have recently studied this class in detail as well \cite{AF-perrint}. Note however that in the notation of \cite{AF-perrint}, the class $\S_n$ also contains the ``trivial'' sets $\emptyset, \RR^n$, whereas in this note it does not.

The class $\K^n$ (and its subclass $\S_n$) is endowed with a natural metric, the Hausdorff metric $\delta$, given by
\[
\delta(K,T) = \inf \setdef{\lambda > 0}{K + \lambda B_2^n \supseteq T \text{ \  and \  } T + \lambda B_2^n \supseteq K}.
\]
The Hausdorff metric is a very useful and well known way to measure distance between convex sets, and the rigid motions are natural isometries of this metric space.  It was shown by Schneider~\cite{Schneider1975} that any surjective isometry of $(\K^n, \delta)$ is given by a rigid motion.

It is both surprising and trivial that the metric space $(\S_n, \delta)$ admits another, essentially different, isometry. Indeed, since the class $\S_n$ is given by summands of the ball $B_2^n$, one can map each $K \in \S_n$ to the unique $L \in \S_n$ such that $K-L = B_2^n$. This $L$ we denoted in \cite{AF-perrint, ACF-preprint} by $K^c$ (since it can also be seen as a cost-induced transform in the terminology of \cite{artstein2023zoo}). Note that with these definitions $h_K(u) + h_{K^c}(-u) = 1$ for all $u \in S^{n-1}$, where $h_K$ is the support function of $K$ given by $h_K(u) = \max_{x \in K} \iprod{x}{u}$. Therefore, since the Hausdorff metric $\delta$ coincides with the $L^\infty$ metric on the class of support functions restricted to the unit sphere $S^{n-1}$, the map $K \mapsto K^c$ is an isometry with respect to $\delta$. 

In~\cite{AF-perrint} we showed that, similarly to the result of Schneider for $\K^n$ \cite{Schneider1975}, the only surjective isometries of $\S_n$ are given, up to rigid motions,  either by the identity or by $K \mapsto K^c$.  
We thus see that the class $\S_n$ admits not one, but {\em two} essentially different isometries. It is instructive to note that the isometry $K\mapsto K^c$ satisfies that for any $K \in \S_n$ we have $K^{cc} = K$, and also 
\[
    K^c = \bigcap_{x \in K} (x + B_2^n).
\]
Moreover, it is easy to see that the $c$-duality mapping 
respects Minkowski averaging (for a discussion see \cite{AF-perrint}), namely 
for  $K,T\in \S_n$ and $\lambda \in (0,1)$, one has
\[ ((1-\lambda)K + \lambda T)^c = (1-\lambda)K^c + \lambda T^c.\]
We mention in passing that it was also shown in~\cite{AF-perrint} that  $K\mapsto K^c$  is, again up to rigid motions, the unique order-reversing bijection on $\S_n$.

Going back to $(\K^n, \delta)$, Gruber and Lettl~\cite{GruberLettl1980} extended Schneider's result, and showed that in $\K^n$, without the surjectivity assumption, the only isometries which exist are of the form $K\mapsto gK+L$ for a rigid motion $g$ and a fixed $L\in \K^n$.  In this note we provide the analogous result for the metric space   $(\S_n,\delta)$. However, it turns out that the only isometries which exist are in fact surjective, and are thus given either by a rigid motion or by a rigid motion composed with the $c$-duality. This is yet another key difference between the class $\S_n$ and the class of all convex bodies.  

\begin{theorem}\label{thm:main_not_bijective2}
Let $T:\S_n\to \S_n$ be an isometry (not assumed a-priori to be surjective) with respect to the Hausdorff distance. Then there exists a rigid motion (orthogonal transformation and translation) $g:\RR^n \to \RR^n$ such that either $TK = gK$ for all $K\in \S_n$ or $TK = gK^c$ for all $K\in \S_n$. 
\end{theorem}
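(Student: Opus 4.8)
The plan is to follow the strategy of Gruber and Lettl \cite{GruberLettl1980}, replacing the outer-parallel-body constructions they use (which are not available here, since outer parallel bodies of ball-bodies need not be ball-bodies) by two features special to $\S_n$: it is closed under Minkowski averages and under intersections, and it carries the isometric involution $K\mapsto K^c$. I identify each $K\in\S_n$ with its support function $h_K\in C(S^{n-1})$, so that $\delta$ becomes the sup-norm distance; write $\mathcal H=\{h_K:K\in\S_n\}$, a closed convex subset of $C(S^{n-1})$ which is bounded modulo the $n$-dimensional subspace $V=\{\iprod{v}{\cdot}:v\in\RR^n\}$ of support functions of singletons (every ball-body is a translate of a body contained in $B_2^n$). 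I abuse notation and also call $T$ the induced isometry $\mathcal H\to\mathcal H$.

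Step 1: $T$ respects the Minkowski structure, i.e. $T\bigl((1-\lambda)K+\lambda L\bigr)=(1-\lambda)\,TK+\lambda\,TL$ for $K,L\in\S_n$ and $\lambda\in[0,1]$. Since $T$ is continuous, by the standard dyadic-subdivision argument it suffices to treat $\lambda=\tfrac12$, i.e.\ to show that $T$ preserves Minkowski midpoints. Since sup-norm balls are far from strictly convex, a metric midpoint of $h_K,h_L$ is usually not unique, so the point is to characterize the Minkowski midpoint $m:=\tfrac12(h_K+h_L)$ among all metric midpoints by conditions phrased purely in terms of $\delta$ — which $T$ then automatically preserves. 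For this I would use that $m$ is the unique metric midpoint of $h_K,h_L$ which in addition satisfies $\delta(N,m)\le\tfrac12\bigl(\delta(N,K)+\delta(N,L)\bigr)$ for every $N\in\S_n$ \emph{and} has the same property after applying the isometry $c$ (and, if necessary, after intersecting with a fixed ball-body containing all the bodies in play); these extra constraints are available precisely because $c$ commutes with Minkowski averaging and $\S_n$ is closed under intersections. Verifying that these conditions really do pin down $m$ — by producing, for any candidate $g\neq m$, explicit test bodies $N$ (translates of balls and of points placed far along the direction where $g$ and $m$ differ) that violate one of the inequalities — is the technical heart of the proof, and the step I expect to be the main obstacle.

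Step 2: identification of the linear part. Granting Step 1, fix $K_0\in\S_n$ and set $\Lambda(h_K-h_{K_0}):=h_{TK}-h_{TK_0}$; Minkowski-affinity makes $\Lambda$ additive and positively homogeneous on the convex cone generated by $\mathcal H-h_{K_0}$, hence the restriction of a linear isometry on its span, and $h_{TK}=\Lambda h_K+h_0$ with $h_0:=h_{TK_0}-\Lambda h_{K_0}$ (note $\mathcal H\subseteq\mathcal H-\mathcal H$, as $h_{\{0\}}=0$, so $h_{K_0}$ lies in that span). Differences of support functions of ball-bodies contain every $C^2$ function $f$ on $S^{n-1}$ of sufficiently small $C^2$-norm (take $f=h_A-h_{\frac12 B_2^n}$, where $A$ has support function $\tfrac12+f$, so that $A\in\S_n$), so $\mathrm{span}(\mathcal H-\mathcal H)$ is dense in $C(S^{n-1})$ and $\Lambda$ extends by continuity to a linear isometric embedding $C(S^{n-1})\to C(S^{n-1})$. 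By the Holszty\'nski representation of linear into-isometries of $C(\cdot)$-spaces, there are a closed set $Y_0\subseteq S^{n-1}$, a continuous surjection $\psi:Y_0\to S^{n-1}$ and a continuous unimodular function $\tau$ on $Y_0$ with $(\Lambda f)|_{Y_0}=\tau\cdot(f\circ\psi)$.

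Step 3: feed back the hypothesis that $T$ maps into $\S_n$. The function $h_{TK}=\Lambda h_K+h_0$ must be the support function of a \emph{ball-body} for every $K\in\S_n$; applying this to all singletons $K=\{x\}$ and all ball-translates $K=x+B_2^n$, and using that a function of the shape $\pm\iprod{x}{\psi(\cdot)}+h_0$ can remain the support function of a (bounded) ball-body for all $x\in\RR^n$ only when $\psi$ is the restriction of a linear map, forces $Y_0=S^{n-1}$ and $\psi$ to be an orthogonal transformation $O$, whence $\tau\equiv\varepsilon\in\{\pm1\}$ is constant ($S^{n-1}$ being connected for $n\ge2$; the case $n=1$ reduces to the elementary classification of isometries of $\S_1$, which is isometric to the strip $\{(c,\ell):c\in\RR,\ \ell\in[0,2]\}$ with the metric $|c-c'|+\tfrac12|\ell-\ell'|$). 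If $\varepsilon=+1$ one then gets $h_0=\iprod{v}{\cdot}\in V$ and $h_{TK}(u)=h_K(O^{-1}u)+\iprod{v}{u}$, i.e.\ $TK=OK+v$, a rigid motion. If $\varepsilon=-1$ one must moreover have $\psi$ equal to $O$ composed with the antipodal map and $h_0=1+\iprod{v}{\cdot}$ (otherwise $T\{0\}$ is not a ball-body), giving $h_{TK}(u)=1-h_K(-O^{-1}u)+\iprod{v}{u}=h_{(OK)^c}(u)+\iprod{v}{u}$, i.e.\ $TK=OK^c+v$. In both cases $TK=gK$ or $TK=gK^c$ for the rigid motion $g\colon x\mapsto Ox+v$; in particular $T$ is surjective, which recovers the classification of \cite{AF-perrint} as the special case where surjectivity is assumed from the start.
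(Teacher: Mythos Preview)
Your overall architecture---show $T$ is Minkowski-affine, linearize, then classify via a Banach--Stone/Holszty\'nski representation---is quite different from the paper's, which never touches support-function functional analysis at all. The paper instead first proves that some singleton lies in the image of $T$ (via a degree-theoretic argument applied to the circumcenter map, using that continuous $\varepsilon$-isometries of $\RR^n$ are surjective), then shows by a short geodesic argument that in fact every translate of a fixed $K_0\in\{\text{point},\text{unit ball}\}$ maps to a singleton, and finally reconstructs $T$ from its action on these translates via $K=\bigcap_x (x+\delta(x,K)B_2^n)$. So the comparison is: yours is functional-analytic and tries to prove affinity directly; the paper's is geometric/topological and bypasses affinity by first locating enough points in the image.

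The genuine gap in your proposal is in Step~1, and it is not merely the unfinished verification you flag---it is structural. Your proposed characterization of the Minkowski midpoint $m=\tfrac12(K+L)$ is of the form ``$m$ is the unique metric midpoint satisfying $\delta(N,m)\le\tfrac12(\delta(N,K)+\delta(N,L))$ for \emph{all} $N\in\S_n$'' (possibly with further such universally quantified conditions). Under an isometry $T$ that is \emph{not assumed surjective}, such a condition transfers only to ``for all $N'\in T(\S_n)$'', not to ``for all $N'\in\S_n$''. Thus even if your characterization pins down $m$ on the domain side, it need not pin down $Tm$ among the metric midpoints of $TK,TL$ on the target side: you cannot test against bodies outside the image. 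This is exactly the obstruction that makes the non-surjective case genuinely harder than the surjective one already treated in \cite{AF-perrint}, and your argument as written does not address it. (A posteriori, $T$ \emph{is} Minkowski-affine, since the theorem says $T$ is a rigid motion possibly composed with $c$; the question is whether one can prove this without already knowing the answer.)

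There is a secondary looseness in Steps~2--3: Holszty\'nski's theorem only controls $\Lambda f$ on the closed subset $Y_0\subseteq S^{n-1}$ and says nothing about $\Lambda f$ on $S^{n-1}\setminus Y_0$, so ``feeding back'' the requirement that $\Lambda h_K+h_0$ be a support function does not immediately force $Y_0=S^{n-1}$ in the way you suggest; one would need an additional argument constraining the off-$Y_0$ behaviour. But this is minor compared to the Step~1 issue.
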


The method of proof is completely different from the method we used in \cite{ACF-preprint}, which relied on understanding special geodesics of the metric space, and so was, in some sense, a more ``local'' approach. 
The idea of our proof in this note is as follows. We will first use topological arguments to show that there is a point (that is, a singleton $\{x_0\}$, which for simplicity we will denote by $x_0$) in the image of $T$. In fact, we will show that this point is the image of either a unit ball or a point. We will then use an argument very similar to that in \cite{gruber1980isometrien} to show that in fact all points are in the image of the isometry $T$, and are the images of translates of one given set, which is either a point or a Euclidean unit ball.
The rest of the argument is quite standard - the isometry condition will then imply that the map restricted to these translates must be induced by a rigid motion, and since a set in $\S_n$ can be reconstructed using its distance from points, the proof will be complete. 

To prove the first step, namely that some point must be in the image of $T$, we will use a very natural fact, which we were not able to locate in the literature and thus include its proof at the end of this note. To state it, we define what is an $\varepsilon$-isometry in a metric space (see e.g.~\cite{burago2001course}). 
\begin{definition}
    Let $\varepsilon \geq 0$. A map $f: X \to Y$ between two metric spaces $(X,d_X), (Y,d_Y)$ is called an $\varepsilon$-isometry if for all $x,x' \in X$,
    \[
        \big| d_Y(f(x), f(x')) - d_X(x,x') \big| \leq \varepsilon.
    \]
\end{definition}
These types of near-isometries (and their generalizations such as quasi-isometries) are usually used to study the large-scale geometry of a space in a way which ignores small-scale details. For example, see the pioneering works of Gromov \cite{gromov1999metric}, or \cite{vaisala2002survey} for a survey. In particular, they are seldom assumed to be continuous.  Many results are known about these objects, for example that if $X = Y = \RR^n$ then any $\eps$-isometry is  ``coarsely surjective'', namely any point in $\RR^n$ is within some fixed distance of the image of $f$ (see \cite[Exercise 6.12]{inbook}).
However, it turns out that  with the assumption of 
continuity, and when considering the Euclidean space $\RR^n$,  an $\eps$-isometry must be surjective. The following lemma is probably known, but we have not found it in the literature and thus prove it at the end of this note. 

\begin{lemma}\label{lem:quasi_is_onto}
    Let $\varepsilon \geq 0$ and let $f: \RR^n \to \RR^n$ be a continuous $\varepsilon$-isometry. Then $f$ is surjective.
\end{lemma}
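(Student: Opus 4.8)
The plan is to run a Brouwer degree argument. Write $y_0=f(0)$ and let $B_R$ denote the open ball of radius $R$ centered at the origin. The $\varepsilon$-isometry hypothesis immediately yields the ``escape to infinity'' estimate
\[
|f(x)-y_0|\ \ge\ |x|-\varepsilon \qquad (x\in\RR^n),
\]
so for every $R>\varepsilon$ we have $y_0\notin f(\partial B_R)$, the Brouwer degree $\deg(f,B_R,y_0)$ is well defined, and moreover $f(\partial B_R)$ is disjoint from the connected open set $U_R:=\{\,y:|y-y_0|<R-\varepsilon\,\}\ni y_0$.

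The main step is to show that $\deg(f,B_R,y_0)\neq 0$ for all large $R$. For $R>\varepsilon$, introduce the rescaled boundary maps $\psi_R\colon S^{n-1}\to\RR^n$, $\psi_R(u)=\tfrac{1}{R}\bigl(f(Ru)-y_0\bigr)$; the hypothesis becomes
\[
\bigl|\,|\psi_R(u)-\psi_R(v)|-|u-v|\,\bigr|\le \varepsilon/R
\qquad\text{and}\qquad
\bigl|\,|\psi_R(u)|-1\,\bigr|\le \varepsilon/R
\]
for all $u,v\in S^{n-1}$. Along any sequence $R_k\to\infty$ the maps $\psi_{R_k}$ are uniformly bounded and equicontinuous (the defect $\varepsilon/R_k$ tends to $0$), so by Arzel\`a--Ascoli a subsequence converges uniformly to some $\psi_\infty\colon S^{n-1}\to\RR^n$ with $|\psi_\infty(u)-\psi_\infty(v)|=|u-v|$ and $|\psi_\infty(u)|=1$ for all $u,v$. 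Since $S^{n-1}$ affinely spans $\RR^n$, a distance-preserving map of it into $\RR^n$ extends (by a well-known fact) to a rigid motion, and the normalization $|\psi_\infty|\equiv1$ forces the translation part to vanish; hence $\psi_\infty=U|_{S^{n-1}}$ for some $U\in O(n)$. Now recall that $\deg(f,B_R,y_0)$ equals the topological degree of the normalized boundary map $u\mapsto \dfrac{f(Ru)-y_0}{|f(Ru)-y_0|}=\dfrac{\psi_R(u)}{|\psi_R(u)|}\colon S^{n-1}\to S^{n-1}$, and along our subsequence these converge uniformly to $U|_{S^{n-1}}$; since the degree of a self-map of $S^{n-1}$ is a homotopy invariant and is locally constant in the uniform topology, $\deg(f,B_{R_k},y_0)=\deg\bigl(U|_{S^{n-1}}\bigr)=\det U=\pm1$ for all large $k$.

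Finally, fix $k$ large with $\deg(f,B_{R_k},y_0)=\pm1$. Because $f(\partial B_{R_k})$ misses the connected set $U_{R_k}$, which contains $y_0$, the degree $\deg(f,B_{R_k},\cdot)$ is constant (equal to $\pm1$) on $U_{R_k}$, so by the solution property of the Brouwer degree, $U_{R_k}\subseteq f(B_{R_k})\subseteq f(\RR^n)$. Letting $R_k\to\infty$ and using $\bigcup_k U_{R_k}=\RR^n$, we conclude that $f$ is surjective. (For $n=1$ one may replace the degree argument by the intermediate value theorem, using only continuity together with the escape estimate.)

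I expect the crux to be the second paragraph: producing the orthogonal limit $U$ of the rescaled boundary maps and, above all, correctly invoking the boundary-dependence and homotopy-invariance properties of the Brouwer degree in order to pass from uniform convergence of the normalized boundary maps to equality of degrees. The escape estimate and the concluding ``constant on a component of the complement'' step are routine.
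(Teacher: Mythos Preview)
Your argument is correct. Both your proof and the paper's are degree arguments on spheres, but you replace the paper's key external input by a compactness step. The paper quotes a result of Alestalo--Trotsenko--V\"ais\"al\"a (their Proposition~\ref{lem:almost_affine}) asserting that any $\varepsilon$-isometry of $RB_2^n$ is uniformly within $C_n\varepsilon$ of an affine isometry $U$, with $C_n$ independent of $R$; with this in hand they argue by contradiction for a single putative missed point $y$, using a straight-line homotopy between the normalized maps $\tfrac{f-y}{|f-y|}$ and $\tfrac{U-y}{|U-y|}$ on $RS^{n-1}$. You instead rescale the boundary data, extract an orthogonal limit $U\in O(n)$ via Arzel\`a--Ascoli, and then use constancy of the Brouwer degree on components of $\RR^n\setminus f(\partial B_R)$ to swallow larger and larger balls into the image. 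Your route is more self-contained (no external approximation lemma), at the cost of invoking slightly more of the Brouwer degree package (boundary dependence, constancy on components, the solution property) rather than just homotopy invariance of sphere degree. Either way the crux is the same: the normalized boundary map is homotopic to a bijection of $S^{n-1}$ and hence has degree $\pm1$.
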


The main step in the proof is to show that for an isometry $T$ of $(\S_n, \delta)$, there must be either a point or a unit ball which is mapped under $T$ to a point. The idea of the proof is as follows. First we use Lemma~\ref{lem:quasi_is_onto} to find a point and a Euclidean unit ball whose images under $T$ have the same circumcenter. We then note that on the one hand, these images must have Hausdorff distance at least 1 from each other, since any point and any unit ball are at a distance of at least $1$, and on the other hand, two sets in $\S_n$ with the same circumcenter have distance at most $1$, and when the distance is $1$ then one of them must be a point. We conclude that if two bodies in $\S_n$ have the same circumcenter and are at distance at least $1$, then one of them must be a point.

\begin{prop}\label{prop:point_in_image}
    Let $T: \S_n \to \S_n$ be an isometry with respect to the Hausdorff distance. Then either there exists a point $x_0 \in \RR^n$ such that $T(x_0)$ is a point, or there exists 
    a Euclidean unit ball $y_0 + B_2^n$ such that $T(y_0+B_2^n)$ is a point.
\end{prop}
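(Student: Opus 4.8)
\emph{Proof strategy.} The plan is to use the circumcenter to transport the problem onto $\RR^n$, where Lemma~\ref{lem:quasi_is_onto} applies. Write $c(K)$ and $r(K)$ for the circumcenter and circumradius of a convex body $K$; recall that $c(K)\in K\subseteq c(K)+r(K)B_2^n$, that $c(\cdot)$ is continuous in the Hausdorff metric, and that $r(K)\le 1$ for every $K\in\S_n$ since such $K$ lies in a translate of $B_2^n$ — in particular $\delta(K,\{c(K)\})=r(K)\le 1$. First I would define $F\colon\RR^n\to\RR^n$ by $F(x)=c(T(\{x\}))$. Since $x\mapsto\{x\}$ is an isometric embedding $\RR^n\hookrightarrow\S_n$, the map $F$ is continuous; and combining $\delta(T\{x\},T\{y\})=|x-y|$ with the triangle inequality and the bound $\delta(T\{x\},\{F(x)\})\le 1$ one checks that $\big||F(x)-F(y)|-|x-y|\big|\le 2$. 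Thus $F$ is a continuous $2$-isometry of $\RR^n$, hence surjective by Lemma~\ref{lem:quasi_is_onto}.

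Next, fix any unit ball $\{y_0\}+B_2^n$, put $z_0=c\big(T(\{y_0\}+B_2^n)\big)$, and use surjectivity of $F$ to choose $x_0$ with $F(x_0)=z_0$. Then $A:=T(\{x_0\})$ and $B:=T(\{y_0\}+B_2^n)$ have the same circumcenter $z_0$, while $\delta(A,B)=\delta\big(\{x_0\},\{y_0\}+B_2^n\big)\ge 1$, because $T$ is an isometry and any point lies at Hausdorff distance at least $1$ from any unit ball. So the proposition reduces to the geometric claim: \emph{if $P,Q\in\S_n$ share a circumcenter and $\delta(P,Q)\ge 1$, then $\delta(P,Q)=1$ and at least one of $P,Q$ is a point.} Applying this to $A,B$ then gives that $T(\{x_0\})$ is a point (the first alternative) or $T(\{y_0\}+B_2^n)$ is a point (the second).

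To prove the claim I would normalise the common circumcenter to $0$, so that $P,Q\subseteq B_2^n$ and $0\in P\cap Q$; then $P\subseteq B_2^n\subseteq Q+B_2^n$ and symmetrically, so $\delta(P,Q)\le 1$ and hence $\delta(P,Q)=1$. Writing $\delta(P,Q)$ as the larger of the two one-sided Hausdorff distances and relabelling if necessary, there is $p^*\in P$ with $\mathrm{dist}(p^*,Q)=1$; since $p^*\in B_2^n$ and $0\in Q$, this forces $|p^*|=1$ and $0$ to be the nearest point of $Q$ to $p^*$, so $0\in\partial Q$. As $Q\in\S_n$ is an intersection of translated unit balls and $0\in\partial Q$, one of those balls, say $y+B_2^n$ with $|y|=1$, contains $Q$ and passes through $0$; then every contact point $x\in Q\cap\partial\big(r(Q)B_2^n\big)$ satisfies $|x|=r(Q)$ and $|x-y|\le 1$, i.e. $\langle x,y\rangle\ge r(Q)^2/2$, so if $r(Q)>0$ the whole contact set lies in an open halfspace missing $0$, contradicting that $0=c(Q)$ lies in the convex hull of the contact set. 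Therefore $r(Q)=0$ and $Q$ is a point. I expect this last claim to be the crux: the inequality $\delta\le 1$ is soft, but pinning the equality case down to a forced degeneration to a point is where the special structure of $\S_n$ — an outer supporting \emph{unit} ball at every boundary point — does the real work; the remaining ingredients (continuity of $F$, its $2$-isometry estimate, and the appeal to Lemma~\ref{lem:quasi_is_onto}) are routine.
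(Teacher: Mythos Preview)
Your argument is correct and follows essentially the same strategy as the paper: push the problem to $\RR^n$ via the circumcenter, invoke Lemma~\ref{lem:quasi_is_onto} to get a point and a unit ball whose $T$-images share a circumcenter, and then analyse the equality case $\delta=1$ inside $\S_n$. Two small differences are worth noting. First, you only need the surjectivity of $F=f_{pt}$, fixing an arbitrary $y_0$ and hitting $c(T(y_0+B_2^n))$; the paper proves both $f_{pt}$ and $f_{ball}$ surjective, which is slightly more than necessary. Second, your endgame is organised differently: from $\delta(P,Q)=1$ you locate $p^*\in P$ with $|p^*|=1$ and $\mathrm{dist}(p^*,Q)=1$, deduce $0\in\partial Q$, pick a supporting unit ball $y+B_2^n\supseteq Q$ through $0$, and use the contact-set/convex-hull characterisation of the circumcenter to force $r(Q)=0$. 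The paper instead first shows $\max\{r_1,r_2\}=1$, so one image is the full ball $c_0+B_2^n$, and then asserts that a non-point ball-body contains its circumcenter in its \emph{interior} to derive $\delta<1$. That interior claim is exactly what your contact-set computation proves, so your version makes explicit the step the paper leaves implicit; conversely, the paper's route has the pleasant by-product of identifying the other image as a full unit ball. Either way the geometric content is the same.
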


\begin{proof}[Proof of Proposition~\ref{prop:point_in_image}]
Denote by $c: \S_n \to \RR^n$ the circumcenter map, i.e. 
$c(K)$ is the center of its (unique) outball, and denote by $r: \S_n \to [0,1]$ the function that returns the circumradius of $K$. Note that by convexity, $c(K) \in K$ for $K \in \K^n$. The map $c$ is continuous, see for example \cite[Exercise 9.2.25]{burago2001course}.
We start by showing  that there is a point $x_0 \in \RR^n$ and a unit ball $y_0 + B_2^n$ for some $y_0 \in \RR^n$ such that $c(T(x_0)) = c(T(y_0 + B_2^n))$.

To this end,  define two functions $f_{pt},f_{ball}: \RR^n \to \RR^n$ by
\[
  f_{pt}(x) = c(T(x)) \qquad{and}\qquad f_{ball}(x) = c(T(x+B_2^n)) .
\]
Note that $\delta(K,c(K)) \leq 1$ for any $K \in \S_n$, since $c(K) \in K$ and $K \subseteq c(K) + B_2^n$. Thus for any $K, L \in \S_n$, by the triangle inequality we 
have $|\delta(K,L) - \delta (c(K),c(L))| \le 2$. Since  $|x-y| = \delta (x,y) = \delta (T(x), T(y))$ for $x,y \in \RR^n$, applying that inequality to the bodies $K=T(x)$ and $L=T(y)$ yields $\big| |x-y| - |f_{pt}(x) - f_{pt}(y)| \big| \leq 2$. Similarly, since $|x-y| = \delta (x+B_2^n,y+B_2^n) = \delta (T(x+B_2^n), T(y+B_2^n))$, we have $\big| |x-y| - |f_{ball}(x)-f_{ball}(y)| \big| \le 2$, i.e. $f_{pt}$ and $f_{ball}$ are $2$-isometries, and they are also continuous as compositions of continuous maps. Lemma~\ref{lem:quasi_is_onto} implies that both $f_{pt}$ and $f_{ball}$ are surjective, and in particular for any $c_0\in \RR^n$ there exist $x_0,y_0 \in \RR^n$ such that $c(T(x_0)) = f_{pt}(x_0) = c_0 = f_{ball}(y_0) = c(T(y_0+B_2^n))$. Denote $r_1 = r(T(x_0))$ and $r_2 = r(T(y_0 + B_2^n))$.

Next, note that if $x + tB_2^n \supseteq y + B_2^n$ then $t \geq 1$, thus for any $x,y \in \RR^n$ we have 
\begin{align}\label{al:1}
 \delta(x,y+B_2^n) \geq 1, 
\end{align}
with equality if and only if $x = y$. We claim $\{r_1,r_2\}=\{0,1\}$. Indeed, if $\max \{r_1,r_2\} < 1$ then
\[
        T(x_0) \subseteq c_0 + r_1 B_2^n \subseteq T(y_0 + B_2^n) + r_1 B_2^n \qquad{and}\qquad T(y_0 + B_2^n) \subseteq c_0 + r_2 B_2^n \subseteq T(x_0) + r_2 B_2^n ,
\]
which implies $\delta(x_0,y_0+B_2^n) = \delta(T(x_0),T(y_0+B_2^n)) \leq \max \{r_1,r_2\} < 1$, which is a contradiction to \eqref{al:1}. Thus $\max\{r_1,r_2\} = 1$. Assume $r_1 = 1$. The only sets in  $\S_n$ with out-radius $1$ are translates of Euclidean balls, i.e. $T(x_0) = c_0 + B_2^n$. If $T(y_0 + B_2^n)$ is not a point, it contains its circumcenter $c_0$ in its interior, meaning there is a small ball $c_0 + tB_2^n \subseteq T(y_0+B_2^n)$, but then $T(x_0) = c_0 + B_2^n \subseteq T(y_0 + B_2^n) + (1-t)B_2^n$ and $T(y_0 + B_2^n) \subseteq T(x_0)$, which implies $\delta(T(x_0),T(y_0+B_2^n)) \leq 1-t < 1$, again a contradiction. Therefore $c_0=T(y_0 + B_2^n)$ is a point in the image of $T$.
The case $r_2 = 1$ is worked out similarly, to obtain $c_0=T(x_0)$.

\end{proof}

Our second ingredient for the proof of Theorem \ref{thm:main_not_bijective2} has to do with a simple property of geodesics in $(\K^n, \delta)$. We recall that 
in \cite{ACF-preprint} we  used the fact that the only geodesic connecting two points $x,y$ in $\S_n$ is given by $(1-\lambda)x+\lambda y$. Here we will make use of an ``opposite'' fact, which is that a point $x$ cannot lie in the interior of a geodesic connecting two bodies in $\S_n$ which are themselves not points. We mention that in general points {\em can} participate in geodesics which are not entirely made of points, for example $(K_t)_{t\in [0,1]}$ given by $K_t = tu$ for $t\in \left[0,\frac12 \right]$ and $K_t = \frac12 u + \left(t-\frac12 \right) B_2^n$ for $t\in \left[ \frac12, 1 \right]$ is a geodesic in $\S_n$, for any $u \in S^{n-1}$. 

\begin{lemma}\label{lem:not_body_point_body}
    Let $K_0, K_1, K_2 \in \K^n$ such that $\delta(K_0,K_1) + \delta(K_1,K_2) = \delta(K_0,K_2)$. If $K_0,K_2$ are not points, then $K_1$ is also not a point. 
\end{lemma}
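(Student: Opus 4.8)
The plan is to prove the contrapositive: \emph{if $K_1$ is a single point $\{x\}$, then at least one of $K_0,K_2$ is a point}. Write $a=\delta(K_0,K_1)$ and $b=\delta(K_1,K_2)$, so the hypothesis becomes $\delta(K_0,K_2)=a+b$. The natural tool is the identification of $\delta$ with the $L^\infty$-distance of support functions on $S^{n-1}$: writing $h_0,h_1,h_2$ for the support functions of $K_0,K_1,K_2$ we have $h_1=\iprod{x}{\cdot}$, $\|h_0-h_1\|_\infty=a$, $\|h_1-h_2\|_\infty=b$, and $\|h_0-h_2\|_\infty=a+b$.

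First I would locate a direction witnessing the tightness of the triangle inequality. By compactness of $S^{n-1}$ there is $u_0\in S^{n-1}$ with $|h_0(u_0)-h_2(u_0)|=a+b$; since the statement is symmetric in $K_0$ and $K_2$ (swapping them also swaps $a$ and $b$), we may assume $h_0(u_0)-h_2(u_0)=a+b$. Chaining this with the two inequalities $h_0(u_0)\le h_1(u_0)+a$ and $h_2(u_0)\ge h_1(u_0)-b$ forces equality in both; in particular $h_2(u_0)=h_1(u_0)-b=\iprod{x}{u_0}-b$.

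The second step is a short geometric squeeze. From $\delta(K_1,K_2)=b$ we get $K_2\subseteq x+bB_2^n$, while $h_2(u_0)=\iprod{x}{u_0}-b$ says that every $z\in K_2$ satisfies $\iprod{z-x}{u_0}\le -b$. A vector $w$ with $|w|\le b$ and $\iprod{w}{u_0}\le -b$ must, by Cauchy--Schwarz, equal $-bu_0$; hence $K_2=\{x-bu_0\}$ is a point. (Geometrically: the halfspace $\{z:\iprod{z-x}{u_0}\le -b\}$ is bounded by a hyperplane tangent to the ball $x+bB_2^n$ and lies on the far side of it, so it meets the ball in the single tangency point.) This is exactly the contrapositive, and the lemma follows.

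I do not expect a serious obstacle. The two points that need care are the equality case of the $L^\infty$ triangle inequality, which is what pins down the value of $h_2$ in the single direction $u_0$, and the observation that this, together with the containment $K_2\subseteq x+bB_2^n$ coming for free from $\delta(K_1,K_2)=b$, already collapses $K_2$ to a point. One could instead argue entirely in $\RR^n$, using $\delta(K,\{x\})=\max_{y\in K}|y-x|$ and the tightness of $\delta(K_0,K_2)\le \delta(K_0,\{x\})+\delta(\{x\},K_2)$, but the support-function formulation above is shortest.
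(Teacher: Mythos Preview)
Your proof is correct and takes a genuinely different route from the paper's. The paper argues directly by contradiction: assuming $K_1=\{p\}$ while $K_0,K_2$ are not points, it uses that a convex set which is not a singleton cannot have all its points equidistant from $p$, so one can choose $x,y\in K_0$ with $|p-x|<|p-y|=\delta(K_0,K_1)$ and $z,w\in K_2$ with $|p-z|<|p-w|=\delta(K_1,K_2)$; the containments $K_2\subseteq K_0+(|p-x|+|p-w|)B_2^n$ and $K_0\subseteq K_2+(|p-y|+|p-z|)B_2^n$ then force $\delta(K_0,K_2)<|p-y|+|p-w|$, contradicting the hypothesis. You instead pass to support functions, locate a single direction $u_0$ where $|h_0(u_0)-h_2(u_0)|=a+b$, deduce $h_2(u_0)=\iprod{x}{u_0}-b$, and combine this with $K_2\subseteq x+bB_2^n$ via the equality case of Cauchy--Schwarz to collapse $K_2$ to the explicit point $\{x-bu_0\}$. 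The paper's version is a touch more elementary (no support functions needed), while yours gives the bonus of identifying exactly which point $K_2$ must be; both are short and either would sit comfortably in the paper. Your closing remark about the purely $\RR^n$-based alternative is essentially the paper's argument.
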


\begin{proof}
Assume that $K_0,K_2$ are not points, and suppose that $K_1 = \{p\}$ is a point. Then there exist points $x,y \in K_0$ such that $|p-x| < |p-y| = \delta(K_0,K_1)$, and similarly for $K_2$, there exist points $w,z \in K_2$ such that $|p-z| <|p-w| = \delta(K_1,K_2)$. Note that
\[
K_2 \subseteq
p + |p-w|B_2^n \subseteq
K_0 + (|p-x| + |p-w|)B_2^n  ,
\]
and similarly
\[
K_0 \subseteq
p + |p-y|B_2^n \subseteq 
K_2 + (|p-z| + |p-y|)B_2^n.
\]
Therefore
\begin{eqnarray*}
\delta(K_0,K_2) &\le&
\max \left\{|p-x|+|p-w|, |p-y|+|p-z|\right\} \\
&<& |p-y|+|p-w| = \delta(K_0,K_1) + \delta(K_1,K_2),
\end{eqnarray*}
in contradiction to the hypothesis.
\end{proof}

As the last element of the proof of Theorem \ref{thm:main_not_bijective2} 
we recall the following simple lemma (given as \cite[Lemma 13]{ACF-preprint}) which says that a body is specified by its Hausdorff distances to points. 

\begin{lem}\label{lem:representations}
For any $K\in \S_n$, one has
\begin{eqnarray*}
K = \bigcap_{x\in \RR^n} (x + \delta (x, K) B_2^n).
\end{eqnarray*}
\end{lem}

We also make use of the following fact which states that isometries between (connected subsets with nonempty interior of) Euclidean spaces are affine. This fact is folklore, see for example~\cite[Proposition 9.1.3]{berger2009geometry} for a proof of the case where the isometry is $\RR^n \to \RR^n$. The case where the domain is a connected subset of $\RR^n$ with nonempty interior has an identical proof.

\begin{fact}\label{fact:isometry_is_affine}
    Let $A \subseteq \RR^n$ be connected with nonempty interior, and let $f: A \to \RR^n$ be an isometry. Then there is an affine map $F: \RR^n \to \RR^n$ such that $f = F|_A$.
\end{fact}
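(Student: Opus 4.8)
The plan is to prove the statement first on an open Euclidean ball contained in $A$ --- where convexity lets us mimic the classical proof of the $\RR^n\to\RR^n$ case --- and then to propagate it to all of $A$, using that a ball affinely spans $\RR^n$. Since $A$ has nonempty interior, fix an open ball $B = p + \rho B_2^n \subseteq A$.

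\emph{Step 1: $f$ is affine on a ball.} First, $f$ preserves midpoints on $B$: for $x,y\in B$ the midpoint $m=\tfrac12(x+y)$ lies in $B$, and the relations $|f(x)-f(m)| = |x-m| = \tfrac12|x-y| = |f(m)-f(y)|$ together with $|f(x)-f(y)| = |x-y|$ give equality in the triangle inequality, so by strict convexity of the Euclidean ball $f(m)$ is the midpoint of $f(x)$ and $f(y)$. An isometry is $1$-Lipschitz, hence continuous, and a continuous midpoint-preserving map on a convex open set is affine: setting $h(v) = f(p+v)-f(p)$ one checks that $h$ is additive on a neighborhood of $0$, whence rational homogeneity, and continuity upgrades this to real homogeneity, so $h$ is linear near $0$. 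Thus $f$ coincides on some ball with a unique affine map $F:\RR^n\to\RR^n$; shrinking $B$ if necessary we may assume $f=F$ on all of $B$. Finally, from $|F(x)-F(y)| = |f(x)-f(y)| = |x-y|$ for $x,y\in B$ we get that the linear part of $F$ preserves length on a neighborhood of $0$, hence is orthogonal, so $F$ is a rigid motion and in particular invertible.

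\emph{Step 2: globalization.} Put $g := F^{-1}\circ f : A\to\RR^n$, again an isometry, with $g|_B = \mathrm{id}$. For any $x\in A$ and every $y\in B$ we have $|g(x)-y| = |g(x)-g(y)| = |x-y|$; expanding the squared norms gives $2\langle g(x)-x, y\rangle = |g(x)|^2 - |x|^2$ for all $y$ in the open set $B$, and letting $y$ vary forces $g(x)=x$. Hence $g=\mathrm{id}_A$, that is $f=F|_A$. (One could instead globalize via connectedness, observing that $\{x\in A : f(x)=F(x)\}$ is nonempty, closed in $A$, and --- by the same equidistance computation around any of its points --- open in $A$; but the argument just given uses only nonempty interior.)

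\emph{Main obstacle.} There is no essential difficulty: as the text notes, this is folklore and the proof parallels the classical $\RR^n\to\RR^n$ statement. The only point needing care is the local-to-global passage in Step 2 --- since $A$ is an arbitrary connected set with nonempty interior rather than an open set, one avoids a naive local induction by exploiting that an open ball affinely spans $\RR^n$, which pins down the rigid motion matching $f$ on the ball and forces it to match $f$ everywhere on $A$.
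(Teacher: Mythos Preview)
Your proof is correct. The paper does not actually prove this fact --- it records it as folklore and refers to Berger for the $\RR^n\to\RR^n$ case, remarking that the general statement has ``an identical proof'' --- so your write-up supplies detail where the paper gives only a reference, rather than offering a competing argument. Step~1 is precisely the classical midpoint argument the citation alludes to; your Step~2 globalization via the squared-norm identity is clean and, as you note, shows that the connectedness hypothesis is in fact superfluous (nonempty interior alone pins down the rigid motion globally), which mildly sharpens the statement as recorded in the paper.
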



We are now in a position to prove our main  theorem.

\begin{proof}[Proof of Theorem~\ref{thm:main_not_bijective2}]
    By Proposition~\ref{prop:point_in_image}, there is some $K_0 \in \S_n$ such that $TK_0$ is a point, and furthermore $K_0$ is either a point or a Euclidean unit ball. Denote 
    \[
        M = \setdef{x \in \RR^n}{T(K_0 + x) \text{ is not a point}} .
    \]
We show $M$ is convex. Let $p,q \in M$ and $r \in [p,q]$. Thus 
$\delta(K_0 + p,K_0 + r) + \delta(K_0 + r,K_0 + q) = 
\delta(p,r) + \delta(r,q) = |p-r| + |r-q| = |p-q| = \delta(p,q) = \delta(K_0 + p, K_0 + q)$. Since $T$ is an isometry, $\delta(T(K_0 + p), T(K_0 + r)) + \delta(T(K_0 + r), T(K_0 + q)) = \delta(T(K_0 + p), T(K_0 + q))$. Since $T(K_0 + p)$ and $T(K_0 + q)$ are not points, Lemma~\ref{lem:not_body_point_body} implies that neither  is $T(K_0 + r)$, proving the convexity of $M$. Next we show $M$ must in fact be empty. 
Since $TK_0$ is a point, $0\notin M \neq \RR^n$, and so, seeing that $M$ is convex, there is a closed half-space $H$ such that $M \cap H = \emptyset$, i.e. for any $x \in H$, $Tx$ is a point. The restriction of $T$ to points in $H$ is an isometry from a closed half-space to $\RR^n$, and so by Fact~\ref{fact:isometry_is_affine} is induced by a rigid motion. By composing with the inverse rigid motion we may assume without loss of generality that for any $x \in H$, $Tx = x$. 
    
Suppose towards a contradiction that $M$ is not empty and fix some $p\in M$. For any $x \in \partial H$, $\delta(Tp,x) = \delta(Tp,Tx) = \delta(p,x)$, which implies $Tp \subseteq x + \delta(p,x)B_2^n$. Intersecting over all $x \in \partial H$,
    \[
        Tp \subseteq \bigcap_{x \in \partial H} x + \delta(p,x)B_2^n = [p,p'] , 
    \]
    where $p'$ is the reflection of $p$ through $\partial H$. Since $Tp \in \S_n$ and is contained in a segment, it must be a point, in contradiction to the assumption $p \in M$. Thus $M = \emptyset$.

    So far we have obtained that for any $x \in \RR^n$, $T(K_0 + x)$ is a point. If $K_0$ is a point itself, $T$ is an isometry that maps points to points and therefore by Fact~\ref{fact:isometry_is_affine} its restriction to points must be a rigid motion $g: \RR^n \to \RR^n$. Thus $Q = g^{-1} \circ T: \S_n \to \S_n$ is an isometry restricting to the identity on points. Let $K \in \S_n$. Note that for any $x \in \RR^n$ we have $\delta(QK,x) = \delta(K,x)$, so using Lemma~\ref{lem:representations} twice, we get
    \[
        QK = \bigcap_{x \in \RR^n}(x + \delta(x,QK)B_2^n) = \bigcap_{x \in \RR^n}(x + \delta(x,K)B_2^n) = K ,
    \]
    so $K = QK = g^{-1}TK$, which implies $TK = gK$. 

If $K_0$ is a unit ball, consider $R: \S_n \to \S_n$ given by $RK = TK^c$. This is an isometry as a composition of two isometries. Since $T$ maps unit balls to points, $R$ maps points to points, and thus by Fact~\ref{fact:isometry_is_affine} its restriction to points must be a rigid motion $g: \RR^n \to \RR^n$. Thus $Q = g^{-1} \circ R: \S_n \to \S_n$ is an isometry restricting to the identity on points. As before we deduce that $K=QK$ for all $K \in \S_n$, and since $K = QK = g^{-1}TK^c$, we get $TK = gK^c$. This completes the proof of our main theorem. 
\end{proof}

We end this note with a proof of Lemma \ref{lem:quasi_is_onto}, which we were not able to locate in the literature. 
We will use the following immediate special case of~\cite[Theorem 3.3]{alestalo2001isometric}, which states that $\varepsilon$-isometries on a compact set which is ``not too flat'' are $L^\infty$-close to some isometry.


\begin{prop}\label{lem:almost_affine}
   For any $n \in \NN$ there exists $C_n > 0$ such that the following holds. Let $\varepsilon, R > 0$.  Then for any $\varepsilon$-isometry $f: RB_2^n \to \RR^n$  there exists an affine isometry $U: \RR^n \to \RR^n$ such that for any $x \in RB_2^n$ we have
    \[
        |f(x) - U(x)| \leq C_n \varepsilon = :C .
    \]
\end{prop}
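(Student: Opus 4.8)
The plan is to give a direct argument; the statement is the specialization of \cite[Theorem 3.3]{alestalo2001isometric} to the round ball, and the only delicate issue is that the constant must depend on $n$ alone and \emph{not} on $R$. I would first dispose of the regime $\varepsilon \ge c_n R$ (for a small dimensional constant $c_n$ fixed below): there $\diam(RB_2^n)=2R \le (2/c_n)\varepsilon$, and the translation $U(x)=x+f(0)$ already works, since $|f(x)-U(x)| = |(f(x)-f(0))-x| \le |f(x)-f(0)| + |x| \le 2|x| + \varepsilon \le 2R+\varepsilon \le C_n\varepsilon$. Hence I may assume $\varepsilon \le c_n R$, which keeps every frame below nondegenerate.

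Next I would manufacture an affine isometry from the values of $f$ on a fixed well-spread simplex. Let $p_0,\dots,p_n \in RB_2^n$ be the vertices of a regular simplex of circumradius $R/2$ centered at the origin, so the vectors $p_i-p_0$ are linearly independent with Gram matrix $G \succeq \mu I$, $\mu \asymp R^2$; collect them as the columns of an invertible matrix $P$, and set $q_i=f(p_i)$ with $Q$ the matrix of columns $q_i-q_0$. The $\varepsilon$-isometry bound $\big||q_i-q_j|-|p_i-p_j|\big|\le\varepsilon$ together with $|p_i-p_j|=O(R)$ forces the squared distances, and hence (by polarization) the Gram matrix $\tilde G=Q^\top Q$ of the $q$-frame, to satisfy $\|\tilde G-G\|=O(R\varepsilon)=:\eta$. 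Writing $A=QP^{-1}$, so that $AP=Q$, I compute $A^\top A-I=P^{-\top}(\tilde G-G)P^{-1}$, whence $\|A^\top A-I\|\le\eta/\mu=O(\varepsilon/R)$. Letting $O$ be the orthogonal polar factor of $A$, closeness of $A^\top A$ to $I$ gives $\|O-A\|=O(\varepsilon/R)$, so $\|OP-Q\|=\|(O-A)P\|\le\|O-A\|\,\|P\|=O(\varepsilon)$. The affine isometry $U(x)=O(x-p_0)+q_0$ then satisfies $|U(p_i)-f(p_i)|=O(\varepsilon)$ for every anchor.

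The final step propagates this $O(\varepsilon)$ bound from the anchors to all of $RB_2^n$ by trilateration. Fix $x\in RB_2^n$ and set $y_1=f(x)$, $y_2=U(x)$. Since $U$ is an isometry and $|U(p_i)-q_i|=O(\varepsilon)$, we get $|y_2-q_i|=|x-p_i|\pm O(\varepsilon)$, while the $\varepsilon$-isometry property gives $|y_1-q_i|=|x-p_i|\pm\varepsilon$; thus $\big||y_1-q_i|-|y_2-q_i|\big|=O(\varepsilon)$, and as both distances are $O(R)$ we obtain $\big||y_1-q_i|^2-|y_2-q_i|^2\big|=O(R\varepsilon)$. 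Expanding each square and differencing the relations for indices $i$ and $0$ cancels the terms independent of $i$ and leaves $\langle q_i-q_0,\,y_1-y_2\rangle=O(R\varepsilon)$. Since the columns $q_i-q_0$ are $O(\varepsilon)$-close to the well-conditioned $O(p_i-p_0)$, their smallest singular value is $\asymp R$, so inverting this linear system yields $|y_1-y_2|=O(R\varepsilon)/R=O(\varepsilon)$, i.e. $|f(x)-U(x)|\le C_n\varepsilon$ uniformly in $x$ and $R$.

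The main obstacle is exactly the uniformity in $R$, and the whole argument is organized around the single mechanism that secures it, used twice: an additive error $\varepsilon$ in distances becomes an additive error $O(R\varepsilon)$ in squared distances and inner products, while the relevant frame is well-conditioned with smallest singular value growing like $R$; upon dividing, the factors of $R$ cancel and leave $O(\varepsilon)$. Matching these two scalings honestly, and restricting to $\varepsilon\le c_n R$ so that the Gram matrices stay positive definite and the polar factor is well defined, is the crux—this is precisely the quantitative content of the ``not too flat'' hypothesis of \cite{alestalo2001isometric} specialized to the ball.
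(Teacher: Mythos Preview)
Your argument is correct, and it is \emph{more} than what the paper does: the paper does not prove this proposition at all. It simply records it as ``the immediate special case of \cite[Theorem~3.3]{alestalo2001isometric}'' applied to the round ball, and then remarks that the only feature needed is that the constant depends on $n$ but not on $R$.

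Your proposal supplies a complete, self-contained proof of exactly that feature. The two-regime split ($\varepsilon \gtrsim_n R$ handled by a translation, $\varepsilon \lesssim_n R$ handled by the simplex frame) is clean; the Gram-matrix perturbation $\|\tilde G - G\| = O(R\varepsilon)$, the bound $\|A^\top A - I\| = O(\varepsilon/R)$ via $\|P^{-1}\|^2 \le 1/\mu \asymp R^{-2}$, the polar-factor estimate $\|O - A\| = O(\varepsilon/R)$, and the trilateration step all check out. The one place to be slightly careful is the claim $\sigma_{\min}(Q) \asymp R$: you get it from $\|Q - OP\| = O(\varepsilon)$ and Weyl's inequality, and this is precisely where you use $\varepsilon \le c_n R$ with $c_n$ small enough. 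With that in place the final inversion gives $|f(x) - U(x)| = O(\varepsilon)$ with a constant depending only on $n$.

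So, relative to the paper: you replace a black-box citation by a direct constructive argument. What this buys you is a transparent explanation of the $R$-independence (the ``$R\varepsilon$ in the numerator cancels the $R$ in the denominator'' mechanism you highlight), which is exactly the point the authors flag in their remark following the proposition. What the citation buys the paper is brevity. Both are valid; yours is more informative.
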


\begin{rem}
    We do not need the exact dependence of the constant $C$ on $n$ and $\eps$, but we do make use of the fact that it does not depend on $R$. 
\end{rem}

The proof of Lemma \ref{lem:quasi_is_onto} makes use of the notion of the degree of a map $S^{n-1}\to S^{n-1}$, and the fact that it is invariant under homotopies. For reference, see e.g.~\cite[\textsection 2.2]{hatcher2002algebraic} or \cite[\textsection 5]{milnor1997topology}. 

\begin{proof}[Proof of Lemma \ref{lem:quasi_is_onto}]
Let $f: \RR^n \to \RR^n$ be a continuous $\varepsilon$-isometry, and suppose towards a contradiction that there is some $y \in \RR^n\setminus \Im f$. Fix some $R > |y-f(0)| + C + \eps  $, where $C > 0$ is the constant given by Proposition~\ref{lem:almost_affine}, and let $U: \RR^n \to \RR^n$ be the affine isometry given by applying Proposition~\ref{lem:almost_affine} to $f|_{RB_2^n}$. Since $y \not\in \Im f$, we may define $\tilde{f}: RB_2^n \to S^{n-1}$  by
    \begin{align*}
        \tilde{f}(x)=\frac{f(x)-y}{|f(x)-y|}.
    \end{align*}
This mapping is well defined and continuous. Note that $\tilde{f}$ is defined on the whole ball $RB_2^n$ which is contractible, thus $\tilde{f}|_{RS^{n-1}}$ is null-homotopic and in particular $\deg \tilde{f}|_{RS^{n-1}} = 0$. However, $U(RB_2^n)$ is a ball of radius $R$ centered at $U(0)$, and it contains $y$ in its inerior, since
\[| y-U(0)| \leq |y-f(0)| + |f(0) - U(0)| \leq |y - f(0)| + C < R.\]
We may thus define $\tilde{U}: RS^{n-1} \to S^{n-1}$ by
    \[
        \tilde{U}(x)=\frac{U(x) - y}{|U(x)-y|} .
    \]
Since $y$ is in the interior of the ball $U(RB_2^n)$, any ray emanating from $y$ intersects $\partial U(RB_2^n)$ exactly once, i.e. $\tilde{U}$ is bijective, and in particular 
$\deg \tilde{U} \neq 0$. For any $x \in RS^{n-1}$, by the triangle inequality
\[
  |f(x) - y| \geq \big| |f(x)-f(0)| - |f(0) - y| \big| \ge (R-\varepsilon) - |y-f(0)| > C ,
\]
and so $y \not\in f(x) + C B_2^n$ and by choice of $U$ we know $U(x) \in f(x) + C  B_2^n$. Therefore, for any $x\in RS^{n-1}$, $y$ does not belong to the line segment $[f(x),U(x)]$ connecting $f(x)$ and $U(x)$, so we may define a homotopy $h: RS^{n-1} \times [0,1] \to S^{n-1}$ between $\tilde{f} |_{RS^{n-1}}$ and $\tilde{U}$, by
\[
h(x,t) \mapsto \frac{(1-t)f(x) + tU(x) - y}{|(1-t)f(x) + tU(x) - y|}.
\]
Since the maps $\tilde{f} |_{RS^{n-1}}$ and $\tilde{U}$ are homotopic, we conclude that $0=\deg \tilde{f}|_{RS^{n-1}} = \deg \tilde{U}$, a contradiction. 
\end{proof}

\bibliographystyle{plain}
\bibliography{ref}

{\small
\noindent S. Artstein-Avidan and A. Chor, 
\vskip 2pt
\noindent School of Mathematical Sciences, Tel Aviv University, Ramat
Aviv, Tel Aviv, 69978, Israel.\vskip 2pt
\noindent Email: shiri@tauex.tau.ac.il, arnonchor@mail.tau.ac.il.
\vskip 2pt
\noindent D.I. Florentin, 
\vskip 2pt
\noindent Department of Mathematics, Bar-Ilan University, Ramat Gan,  52900, Israel.   \vskip 2pt
\noindent Email: dan.florentin@biu.ac.il.
}

\end{document}